\newtheorem{thm}{Theorem}[section]
\theoremstyle{definition}
\newtheorem{defn}[thm]{Definition}
\newtheorem{example}[thm]{Example}
\theoremstyle{remark}
\newtheorem{rem}[thm]{Remark}
\numberwithin{equation}{section}
\begin{document}
\title[Disjoint distributionally chaotic abstract PDE's]{Disjoint distributionally chaotic abstract PDE's}

\author{Marko Kosti\' c}
\address{Faculty of Technical Sciences,
University of Novi Sad,
Trg D. Obradovi\' ca 6, 21125 Novi Sad, Serbia}
\email{marco.s@verat.net}

{\renewcommand{\thefootnote}{} \footnote{2010 {\it Mathematics
Subject Classification.} 47A06, 47A16, 47D60, 47D62, 47D99.
\\ \text{  }  \ \    {\it Key words and phrases.}   disjoint distributional chaos, disjoint irregular vectors, integrated $C$-semigroups, $\zeta$-times $C$-regularized resolvent families, Fr\' echet spaces
\\  \text{  }  \ \ This research is partially supported by grant 174024 of Ministry
of Science and Technological Development, Republic of Serbia.}}

\begin{abstract}
In this paper, we analyze disjoint distributionally chaotic abstract non-degenerate partial differential equations in Fr\' echet spaces, with integer or Caputo time-fractional derivatives. We present several illustrative examples and applications of our results established. \end{abstract}
\maketitle

\section{Introduction and Preliminaries}

Linear topological dynamics of continuous operators in Banach and Fr\' echet spaces is an extremely popular field of functional analysis. Basic information about this subject can be obtained by consulting the monographs \cite{bayart} by F. Bayart, E. Matheron and
\cite{erdper} by K.-G. Grosse-Erdmann, A. Peris. 

The notion of distributional chaos for interval maps was introduced by B. Schweizer and J. Sm\' ital in \cite{smital} (1994). For linear continuous operators in Banach spaces, 
distributional chaos was firstly considered by J. Duan et al \cite{duan} (1999) and P. Oprocha \cite{p-oprocha} (2006). N. C. Bernardes Jr. et al \cite{2013JFA} (2013) were the first who systematically analyzed distributional chaos for linear continuous operators in Fr\' echet spaces (cf. also the reserach study of J. A. Conejero et al \cite{mendoza} (2016) for a correspoding study of linear not necessarily continuous operators). Some specific properties of distributionally chaotic operators in Banach spaces have been recently  
investigated by  N. C. Bernardes Jr. et al \cite{2018JMAA} (2018).

Disjoint hypercyclic linear operators were introduced independently by L. Bernal--Gonz\'alez \cite{bg07} (2007) and J. B\`es, A. Peris \cite{bp07} (2007). Similar concepts, like disjoint mixing property and disjoint supercyclicity, have been analyzed by a great number of authors after that (for further information about disjoint hypercyclic operators and their generalizations, we refer the reader to \cite{bm201345}, \cite{revista},  \cite{ma10}  and references cited therein.

The main aim of this paper is to continue our recent research study \cite{revista} of disjoint distributional chaos in Fr\' echet spaces by investigating the abstract partial differential equations in Fr\' echet spaces with integer or Caputo time-fractional derivatives (concerning distributional chaos in metric and Fr\' echet spaces, one may refer e.g. to \cite{2011}, \cite{2018JMAA}, \cite{chen-chen}, \cite{marek-trio}, \cite{p-oprocha-tams}-\cite{p-oprocha} and references cited therein). We focus our attention to the analysis of disjoint distributionally chaotic integrated $C$-semigroups, as a rather general concept 
for the investigations of abstract partial differential equations of first order. We also consider disjoint distributionally chaotic properties of abstract time-fractional differential equations with Caputo derivatives; strictly speaking, we analyze disjoint distributional chaos for
$\zeta$-times $C$-regularized resolvent families ($\zeta \in (0,2) \setminus \{1\}$).
For the sake of brevity, we consider only non-degenerate abstract partial differential equations here (for topological dynamics of abstract degenerate  partial differential equations, the reader may consult our joint paper with V. Fedorov \cite{russmath}, the forthcoming monograph \cite{FKP} and references cited therein).

The organization and main ideas of this paper can be described as follows. After giving some necessary explanations about the notation and general framework we are working in, we collect the basic material about integrated $C$-semigroups and $\zeta$-times $C$-regularized resolvent families in two separate subsections, Subsection 1.1 and Subsection 1.2. The second section of paper is devoted to the study of disjoint distributional chaos for integrated $C$-semigroups, while the third section of paper is devoted to the study of 
disjoint distributional chaos for  $\zeta$-times $C$-regularized resolvent families ($\zeta \in (0,2) \setminus \{1\}$). Although not used explicitly, as for single operators \cite{revista}, we also 
provide definitions of disjoint distributinally near to zero vectors, disjoint distributionally unbounded vectors and disjoint distributionally irregular vectors for these solution operator families.
Without any doubt, the main result of paper is Theorem \ref{cvddc}, which provides an efficient tool for proving several other structural results of ours. In addition to the above, a great deal of illustrative examples and applications is presented. 

We use the standard notation in the sequel. By $X$ and $Y$ we denote two
non-trivial Fr\' echet space over the same field of scalars ${\mathbb K}\in \{ {\mathbb R}, {\mathbb C} \}$ and assume that the topologies of $X$ and $Y$ are 
induced by the fundamental systems $(p_{n})_{n\in {\mathbb N}}$ and $(p_{n}^{Y})_{n\in {\mathbb N}}$ of
increasing seminorms, respectively (separability of $X$ and $Y$ will be assumed a priori in future). The translation invariant metric\index{ translation invariant metric} $d :
X\times X \rightarrow [0,\infty),$ defined by
\begin{equation}\label{metri}
d(x,y):=\sum
\limits_{n=1}^{\infty}\frac{1}{2^{n}}\frac{p_{n}(x-y)}{1+p_{n}(x-y)},\
x,\ y\in X,
\end{equation}
satisfies the following properties:
$
d(x+u,y+v)\leq d(x,y)+d(u,v),$ $x,\ y,\ u,\
v\in X;
$
$d(cx,cy)\leq (|c|+1)d(x,y),$ $ c\in {\mathbb K},\ x,\ y\in X,
$
and
$
d(\alpha x,\beta x)\geq \frac{|\alpha-\beta|}{1+|\alpha-\beta|}d(0,x),$ $x\in X,$ $ \alpha,\ \beta \in
{\mathbb K}.$
 %For any $\epsilon>0$ given in advance, set $L(0,\epsilon):=\{x\in X : d(x,0)<\epsilon\}.$
 Define the translation invariant metric $d_{Y} :
Y\times Y \rightarrow [0,\infty)$ by replacing $p_{n}(\cdot)$ with
$p_{n}^{Y}(\cdot)$ in (\ref{metri}).
If
$(X,\|\cdot \|)$ or $(Y,\|\cdot \|_{Y})$ is a Banach space, then it will be assumed that the
distance of two elements $x,\ y\in X$ ($x,\ y\in Y$) is given by $d(x,y):=\|x-y\|$ ($d_{Y}(x,y):=\|x-y\|_{Y}$).
Keeping in mind this agreement,
our structural results clarified in Fr\' echet spaces retain in the case that $X$ or $Y$ is a Banach space. 

We assume that $N\in {\mathbb N}$ and $N\geq 2.$ Then the fundamental system of increasing seminorms $({\bf p}_{n}^{Y^{N}})_{n\in {\mathbb N}},$ 
where ${\bf p}_{n}^{Y^{N}}(x_{1},\cdot \cdot \cdot,x_{N}):=\sum_{j=1}^{N}p_{n}^{Y}(x_{j}),$ $n\in {\mathbb N}$ ($x_{j}\in Y$ for $1\leq j\leq N$), induces the topology on the Fr\' echet space $Y^{N}.$ The translation invariant metric
\begin{align*}
{\rm d}_{Y^{N}}(\vec{x},\vec{y}):=\sum
\limits_{n=1}^{\infty}\frac{1}{2^{n}}\frac{{\bf p}_{n}(\vec{x}-\vec{y})}{1+{\bf p}_{n}(\vec{x}-\vec{y})},\quad
\vec{x},\ \vec{y}\in Y^{N},
\end{align*}
is strongly equivalent with the metric 
$$
d_{Y^{N}}(\vec{x},\vec{y}):=\max_{1\leq j\leq N}d_{Y}(x_{j},y_{j}),\quad \vec{x}=(x_{1},\cdot \cdot \cdot,x_{N})\in Y^{N},\ \vec{y}=(y_{1},\cdot \cdot \cdot,y_{N})\in Y^{N}.
$$
In the case that $Y$ is a Banach space, then $Y^{N}$ is likewise a Banach space and, in this case, it will be assumed that the distance in $Y^{N}$ is given by $d_{Y^{N}}(\vec{x},\vec{y})=\max_{1\leq j\leq N} \|x_{j}-y_{j}\|_{Y},$ $ \vec{x}\in Y^{N},$ $ \vec{y}\in Y^{N}.$

Suppose that $C\in L(X)$ is injective and $A$ is a closed linear operator with domain and range contained in $X.$
By $D(A),$ $R(A),$ $N(A)$ and $\sigma_{p}(A)$ we denote the domain, range, kernel space and the point spectrum of $A$, respectively.
Set $p_{n}^{C}(x):=p_{n}(C^{-1}x),$ $n\in {\mathbb N},$ $x\in R(C).$ Then
$p_{n}^{C}(\cdot)$ is a seminorm on $R(C)$ and the calibration
$(p_{n}^{C})_{n\in {\mathbb N}}$ induces a Fr\' echet locally convex topology on
$R(C);$ we denote this space simply by $[R(C)].$ Let us recall $[R(C)]$ is separable since $X$ is as well as
that $[R(C)]$ is a Banach space (complex Hilbert space) provided that $X$ is.  Recall that the
$C$-resolvent set of $A,$ denoted by $\rho_{C}(A),$ is defined by
$$
\rho_{C}(A):=\Bigl \{\lambda \in {\mathbb K} : \lambda -A \mbox{
is injective and } (\lambda-A)^{-1}C\in L(X)\Bigr \}.
$$

Set, finally, ${\mathbb C}_{+}:=\{z\in {\mathbb C} : \Re z>0\},$
${\mathbb C}_{-}:=\{z\in {\mathbb C} : \Re z<0\},$
${\mathbb R}_{+}:=(0,\infty),$
${\mathbb R}_{-}:=(-\infty ,0),$
${\mathbb K}_{+}:=\{{\mathbb C}_{+},\ {\mathbb R}_{+}\},$
${\mathbb K}_{-}:=\{{\mathbb C}_{-},\ {\mathbb R}_{-}\},$ 
$\Sigma_{\alpha}:=\{z\in {\mathbb C} : z\neq 0,\ |\arg(z)|<\alpha \}$ ($\alpha \in (0,\pi]$), $\lceil s \rceil:=\inf\{k\in {\mathbb Z} : s\leq k\}$ and
${\mathbb N}_{n}:=\{1,\cdot \cdot \cdot,n\}$ ($s\in {\mathbb R},$ $n\in {\mathbb N}$), $g_{\zeta}(t):=t^{\zeta-1}/\Gamma(\zeta)$ ($t>0$, $\zeta>0$) and recall
that the upper density of a set $D\subseteq [0,\infty)$ is defined
by
$$
\overline{dens}(D):=\limsup_{t\rightarrow
+\infty}\frac{m(D \cap [0,t])}{t},
$$
where $m$ denotes the Lebesgue measure on $[0,\infty).$

We need the following notion from \cite{revista}:

\begin{defn}\label{DC-unbounded-fric-DISJOINT}
Suppose that, for every $j\in {\mathbb N}_{N}$ and $k\in {\mathbb N},$ $A_{j,k} : D(A_{j,k})\subseteq X \rightarrow Y$ is a linear operator and $\tilde{X}$ is a closed linear
subspace of $X.$
Then we say that the sequence $((A_{j,k})_{k\in
{\mathbb N}})_{1\leq j\leq N}$ is disjoint
$\tilde{X}$-distributionally chaotic, $(d,\tilde{X})$-distributionally chaotic in short, iff there exist an uncountable
set\\ $S\subseteq \bigcap_{j=1}^{N} \bigcap_{k=1}^{\infty} D(A_{j,k}) \cap \tilde{X}$ and
$\sigma>0$ such that for each $\epsilon>0$ and for each pair $x,\
y\in S$ of distinct points we have 
\begin{align*}
\begin{split}
& \overline{dens}\Biggl( \bigcap_{j\in {\mathbb N}_{N}} \bigl\{k \in {\mathbb N} :
d_{Y}\bigl(A_{j,k}x,A_{j,k}y\bigr)\geq \sigma \bigr\}\Biggr)=1,\mbox{ and }
\\ 
& \overline{dens}\Biggl( \bigcap_{j\in {\mathbb N}_{N}} \bigl\{k \in {\mathbb N} : d_{Y}\bigl(A_{j,k}x,A_{j,k}y\bigr)
<\epsilon \bigr\}\Biggr)=1.
\end{split}
\end{align*}
The sequence $((A_{j,k})_{k\in
{\mathbb N}})_{1\leq j\leq N}$ is said to be densely
$(d,\tilde{X})$-distributionally chaotic iff $S$ can be chosen to be dense in $\tilde{X}.$
A finite sequence $(A_{j})_{1\leq j\leq N}$ of closed linear operators on $X$ is said to be (densely)
$\tilde{X}$-distributionally chaotic\index{$\tilde{X}$-distributionally chaotic operator} iff the sequence $((A_{j,k}\equiv
A_{j}^{k})_{k\in {\mathbb N}})_{1\leq j\leq N}$ is.
The set $S$ is said to be $(d,\sigma_{\tilde{X}})$-scrambled set\index{ $\sigma_{\tilde{X}}$-scrambled set} ($(d,\sigma)$-scrambled set in the case\index{$\sigma$-scrambled set}
that $\tilde{X}=X$) of $((A_{j,k})_{k\in
{\mathbb N}})_{1\leq j\leq N}$ ($(A_{j})_{1\leq j\leq N}$);  in the case that
$\tilde{X}=X,$ then we also say that the sequence $((A_{j,k})_{k\in
{\mathbb N}})_{1\leq j\leq N}$ ($(A_{j})_{1\leq j\leq N}$) is disjoint distributionally chaotic, $d$-distributionally chaotic in short.
\end{defn}

\subsection{Integrated $C$-semigroups}\label{kretinjo-zero}

The following definition is fundamental in the theory of abstract ill-posed differential equations of first order (cf. \cite{knjigah}-\cite{knjigaho} for more details on the subject): 

\begin{defn}\label{first}
Suppose that $\alpha \geq 0$ and $A$ is a closed linear operator. If
there exists a strongly continuous operator family
$(S_\alpha(t))_{t\geq 0}\subseteq L(X)$ such that:
\begin{itemize}
\item[(i)] $S_\alpha(t)A\subseteq AS_\alpha(t)$, $t\geq 0$,
\item[(ii)] $S_\alpha(t)C=CS_\alpha(t)$, $t\geq 0$,
\item[(iii)] for all $x\in X$ and $t\geq 0$: $\int_0^tS_\alpha(s)x\,ds\in D(A)$ and
\begin{align*}
A\int\limits_0^tS_\alpha(s)x\,ds=S_\alpha(t)x-g_{\alpha +1}(t)Cx,
\end{align*}
\end{itemize}
then it is said that $A$ is a subgenerator of a (global)
$\alpha$-times integrated $C$-semigroup $(S_\alpha(t))_{t\geq 0}$.
\end{defn}

If $\alpha =0,$ then $(S_0(t))_{t\geq 0}$ is
also said to be a $C$-regularized semigroup with subgenerator $A$ (we refer the reader to \cite{knjigaho} for definition of an entire $C$-regularized group and its integral generator (subgenerator)). 
The integral generator of $(S_\alpha(t))_{t\geq 0}$ is defined by
\begin{align*}
\hat{A}:=\Biggl\{(x,y)\in X\times
X:S_\alpha(t)x-g_{\alpha+1}(t)Cx=\int\limits^t_0S_\alpha(s)y\,ds,\;t\geq
0\Biggr\}.
\end{align*}
Let us recall that the integral generator of $(S_\alpha(t))_{t\geq 0}$
is a closed linear operator which extends any subgenerator of $(S_\alpha(t))_{t\geq 0}.$
Furthermore, for any subgenerator $A$ of $(S_\alpha(t))_{t\geq 0},$ the
following equality holds $\hat{A}=C^{-1}AC.$

Denote by $Z_{1}(A)$ the space consisting of
those elements $x\in X$ for which there exists a unique $X$-valued
continuous mapping satisfying $\int^t_0 u(s,x)\,ds\in D(A)$ and
$A\int^t_0 u(s,x)\,ds=u(t,x)-x$, $t\geq 0,$ i.e., the unique mild
solution of the corresponding Cauchy problem $(ACP_{1}):$
$$
(ACP_{1}) : u^{\prime}(t)=Au(t),\ t\geq 0, \ u(0)=x.
$$
If $A$ is a subgenerator (the integral generator) of a
global $\alpha$-times integrated $C$-semigroup $(S_\alpha(t))_{t\geq
0},$ then there is only one (trivial) mild solution of $
(ACP_{1})$ with $x=0,$ so that $Z_{1}(A)$ is a linear subspace of $X.$
Moreover, for every number $\beta>\alpha ,$ the
operator $A$ is a subgenerator (the integral generator) of a global
$\beta$-times integrated $C$-semigroup $(S_\beta(t)\equiv
(g_{\beta-\alpha}\ast S_{\alpha}\cdot)(t))_{t\geq 0}.$ As it is well known, the
space
$Z_{1}(A)$
consists exactly of those elements $x\in X$ for which the mapping
$t\mapsto C^{-1}S_{\lceil \alpha \rceil}(t)x,$ $t\geq 0$ is well
defined and $\lceil \alpha \rceil$-times continuously differentiable
on $[0,\infty);$ see e.g. \cite{knjigaho}. As it is usually done in the  theory of $C$-distribution semigroups, we set
$$
{\mathcal G}(\varphi)x:=(-1)^{\lceil \alpha \rceil}\int
\limits^{\infty}_{0}\varphi^{(\lceil \alpha \rceil)}(t)S_{\lceil
\alpha \rceil}(t)x\, dt,\quad \varphi \in {\mathcal D}_{{\mathbb
K}},\ x\in X
$$
and
$$
G\bigl(\delta_{t}\bigr)x:=\frac{d^{\lceil \alpha \rceil}}{dt^{\lceil
\alpha \rceil}}C^{-1}S_{\lceil \alpha \rceil}(t)x, \quad t\geq 0,\
x\in Z_{1}(A);
$$
here ${\mathcal D}_{{\mathbb
K}}$ denotes the space of ${\mathbb K}$-valued smooth test functions with compact support contained in $K.$
Then the following
holds: $G(\delta_t)(Z_{1}(A))\subseteq Z_{1}(A),\ t\geq 0,$ $G(\delta_t)C\subseteq CG(\delta_t),$ $t\geq 0$
and
\begin{equation}\label{C-DS}
G\bigl(\delta_s\bigr)G\bigl(\delta_t
\bigr)x=G\bigl(\delta_{t+s}\bigr)x, \;t,\,s\geq 0,\ x\in Z_{1}(A).
\end{equation}
Is is also known that the solution
space $Z_{1}(A)$ is independent of the choice of
$(S_\alpha(t))_{t\geq 0}$ in the
following sense: If $C_{1}\in L(X)$ is another injective operator
with $C_{1}A\subseteq AC_{1},$ $\gamma \geq 0,$ $x\in X$ and $A$ is
a subgenerator (the integral generator) of a global $\gamma$-times
integrated $C_{1}$-semigroup $(S^\gamma(t))_{t\geq 0},$ then the mapping $t\mapsto
C^{-1}S_{\lceil \alpha \rceil}(t)x,$ $t\geq 0$ is well defined and
$\lceil \alpha \rceil$-times continuously differentiable on
$[0,\infty)$ iff the mapping $t\mapsto C_{1}^{-1}S^{\lceil \gamma
\rceil}(t)x,$ $t\geq 0$ is well defined and $\lceil \gamma
\rceil$-times continuously differentiable on $[0,\infty)$. In this case, we have 
$u(t;x):=G(\delta_{t})x=\frac{d^{\lceil \gamma \rceil}}{dt^{\lceil
\gamma \rceil}}C_{1}^{-1}S^{\lceil \gamma \rceil}(t)x,$ $t\geq 0$
is a unique mild
solution of the corresponding Cauchy problem $(ACP_{1}).$

The notions of exponential equicontinuity and analyticity of integrated $C$-semigroups are well known; the basic results about integrated $C$-cosine functions can be found in \cite{knjigah}-\cite{knjigaho}, as well.

\subsection{$\zeta$-Times $C$-regularized resolvent families ($\zeta \in (0,2) \setminus \{1\}$)}\label{kretinjo}

The following definition has been introduced by M. Li, Q. Zheng and J. Zhang in \cite{zhengtaiwan} (see \cite{knjigah}-\cite{FKP} for more details about abstract time-fractional differential equations):

\begin{defn}\label{1.1}
Suppose that $\zeta >0$ and $A$ is a closed linear operator on $X$. A
strongly continuous operator family $(R_{\zeta}(t))_{t\geq 0}$ is
said to be a $\zeta$-times $C$-regularized resolvent family
having $A$ as a subgenerator iff the following holds:
\begin{itemize}
\item[(i)] $R_{\zeta}(t)A\subseteq AR_{\zeta}(t),\ t\geq 0,$ $R_{\zeta}(0)=C$ and
$CA\subseteq AC,$
\item[(ii)] $R_{\zeta}(t)C=CR_{\zeta}(t),\ t\geq 0$ and
\item[(iii)] $R_{\zeta}(t)x=Cx+\int_{0}^{t}g_{\zeta}(t-s)
AR_{\zeta}(s)x\, ds,\ t\geq 0,\ x\in D(A).$
\end{itemize}
In the case $C=I,$ then we also say that
$(R_{\zeta}(t))_{t\geq 0}$ is a 
$\zeta$-times regularized resolvent family with subgenerator $A.$
\end{defn}

The integral
generator of $(R_{\zeta}(t))_{t\geq 0}$ is defined by
$$
\hat{A}:=\Biggl\{(x,y)\in X \times X : R_{\zeta}(t)x-Cx=\int
\limits_{0}^{t}g_{\zeta}(t-s)R_{\zeta}(s)y\, ds
\mbox{ for all } t\geq 0\Biggr\},
$$
and it is a closed linear operator which extends any
subgenerator of $(R_{\zeta}(t))_{t\geq 0}.$ %It is well known

Let $m:=\lceil \zeta \rceil.$ The Caputo fractional derivative\index{fractional derivatives!Caputo}
${\mathbf D}_{t}^{\zeta}u(t)$ is defined for those functions $u\in
C^{m-1}([0,\infty) : X)$ for which $g_{m-\zeta} \ast
(u-\sum_{k=0}^{m-1}u_{k}g_{k+1}) \in C^{m}([0,\infty) : X),$
by
$$
{\mathbf
D}_{t}^{\zeta}u(t):=\frac{d^{m}}{dt^{m}}\Biggl[g_{m-\zeta}
\ast \Biggl(u-\sum_{k=0}^{m-1}u_{k}g_{k+1}\Biggl)\Biggr].
$$
The abstract evolution equation
\begin{align}\label{bez}
{\mathbf D}_{t}^{\zeta}u(t)=Au(t),\ t>0,\ u(0)=x,\ u^{(k)}(0)=0,\
k=1,\cdot \cdot \cdot, m-1,
\end{align}
is well posed in the sense of \cite[Definition 2.2]{bajlekova} iff the abstract Volterra equation
\begin{equation}\label{prus}
u(t;x)=x+\int
\limits^{t}_{0}g_{\zeta}(t-s)Au(s;x)\, ds,\
t\geq 0,
\end{equation}
is well posed in the sense of \cite[Definition 1.2]{prus}.
%Assume (\ref{prus}) has a unique solution for all $x\in R(C)$ and
%put $R_{\alpha}(t)x:=u(t;Cx),$ $t\geq 0,$ $x\in E.$ By the proof of
%~\cite[Proposition 1.1, p. 31-32]{prus}, $(R_{\alpha}(t))_{t\geq 0}$
%is an $\alpha$-times $C$-regularized resolvent family having $A$ as
%a subgenerator.
 Suppose that $A$ is a subgenerator of an
$\zeta$-times $C$-regularized resolvent family
$(R_{\zeta}(t))_{t\geq 0},$ and
\begin{equation}\label{ceregulari}
R_{\zeta}(t)x=Cx+A\int \limits_{0}^{t}g_{\zeta}(t-s)
R_{\zeta}(s)x\, ds,\ t\geq 0,\ x\in X.
\end{equation}
Denote by $Z_{\zeta}(A)$ the set
consisting of those vectors $x\in X$ such that
$R_{\zeta}(t)x\in R(C),$ $t\geq 0$ and the mapping $t\mapsto
C^{-1}R_{\zeta}(t)x,$ $t\geq 0$ is continuous. Then $R(C)\subseteq
Z_{\zeta}(A),$ and $x\in Z_{\zeta}(A)$ iff there exists a unique strong
solution of (\ref{prus}); if this is the case, the unique strong
solution of (\ref{prus}) is given by $u(t;x)=C^{-1}R_{\zeta}(t)x,$
$t\geq 0.$ In the sequel, we assume the validity of \eqref{ceregulari} a priori.

Denote by $E_{\beta}(z)$ the Mittag-Leffler
function $E_{\beta}(z):=\sum
 _{n=0}^{\infty}\frac{z^{n}}{\Gamma(\beta n+1)},$ $z\in
{\mathbb C},$ where $\beta>0.$ 
Suppose, further, that $\zeta \in (0,2) \setminus \{1\}$ and $l\in {\mathbb N}
\setminus \{1\}.$ We will use the following asymptotic formulae for the
Mittag-Leffler functions (\cite{bajlekova}):
\begin{equation}\label{asim1}
E_{\zeta}(z)=
\frac{1}{\zeta}e^{z^{1/\zeta}}+\varepsilon_{\zeta}(z),\ |\arg
(z)|<\zeta \pi/2,
\end{equation}
and
\begin{equation}\label{asim2}
E_{\zeta}(z)= \varepsilon_{\zeta}(z),\ |\arg (-z)|<\pi-\zeta
\pi/2,
\end{equation}
where
\begin{equation}\label{asim3}
\varepsilon_{\zeta}(z)=\sum
\limits_{n=1}^{l-1}\frac{z^{-n}}{\Gamma(1-\zeta n)}+O(|z|^{-l}),\
|z|\rightarrow \infty .
\end{equation}

\section{Disjoint distributionally chaotic properties of abstract PDEs of first order}\label{d-MLOs}

In \cite{revista}, we have introduced and analyzed twelve different types of disjoint distributional chaos for multivalued linear operators in Fr\' echet spaces. For the sake of simplicity, we will consider here only one type of 
disjoint distributional chaos for integrated $C$-semigroups, disjoint distributional chaos of type $1.$ This is the most intriguing type of disjoint distributional chaos considered in \cite{revista} because it is the strongest one and implies all others (we will not particularly emphasize further that this is disjoint distributional chaos of type $1$): 

\begin{defn}\label{to fuck}
Let $\alpha_{j}\geq 0,$ let $C_{j}\in L(X)$ be injective for all $j\in {\mathbb N}_{N}$  and let $(S_{\alpha_{j}}(t))_{t\geq
0}$ be a global $\alpha_{j}$-times integrated $C_{j}$-semigroup with the integral generator $A_{j}$ ($j\in {\mathbb N}_{N}$). 
Suppose that $\tilde{X}$ is a closed linear
subspace of $X.$ Denote by $t\mapsto G_{j}(\delta_{t})x,$ $t\geq 0$ the unique mild
solution of the corresponding Cauchy problem $(ACP_{1}),$ with the operator $A$ replaced by $A_{j}$ therein ($j\in {\mathbb N}_{N}$). Then we say that $((S_{\alpha_{j}}(t))_{t\geq
0})_{1\leq j\leq N}$ are
disjoint
$\tilde{X}$-distributionally chaotic, $(d,\tilde{X})$-distributionally chaotic in short, iff there exist an uncountable
set $S\subseteq \bigcap_{j=1}^{N} Z_{1}(A_{j}) \cap \tilde{X}$ and
$\sigma>0$ such that for each $\epsilon>0$ and for each pair $x,\
y\in S$ of distinct points we have that for each $j\in {\mathbb N}_{N}$ and $t\geq 0$ 
we have that
\begin{align*}
\begin{split}
& \overline{dens}\Biggl( \bigcap_{j\in {\mathbb N}_{N}} \bigl\{t\geq 0 :
d_{Y}\bigl(G_{j}(\delta_{t})x,G_{j}(\delta_{t})y\bigr)\geq \sigma \bigr\}\Biggr)=1,\mbox{ and }
\\ 
& \overline{dens}\Biggl( \bigcap_{j\in {\mathbb N}_{N}} \bigl\{t\geq 0 : d_{Y}\bigl(G_{j}(\delta_{t})x,G_{j}(\delta_{t})y\bigr)
<\epsilon \bigr\}\Biggr)=1.
\end{split}
\end{align*}

The sequence $(S_{\alpha_{j}}(t))_{t\geq
0}$ is said to be densely
$(d,\tilde{X})$-distributionally chaotic iff $S$ can be chosen to be dense in $\tilde{X}.$
The set $S$ is said to be $(d,\sigma_{\tilde{X}})$-scrambled set\index{ $\sigma_{\tilde{X}}$-scrambled set} ($(d,\sigma)$-scrambled set in the case\index{$\sigma$-scrambled set}
that $\tilde{X}=X$) of $((S_{\alpha_{j}}(t))_{t\geq
0})_{1\leq j\leq N}$;  in the case that
$\tilde{X}=X,$ then we also say that the sequence $((S_{\alpha_{j}}(t))_{t\geq
0})_{1\leq j\leq N}$ is (densely) disjoint distributionally chaotic, (densely) $d$-distributionally chaotic in short.
\end{defn}

Now we introduce the notion of disjoint distributionally irregular vectors for integrated $C$-semigroups:

\begin{defn}\label{dccfa}
Let $\alpha_{j}\geq 0,$ let $C_{j}\in L(X)$ be injective for all $j\in {\mathbb N}_{N}$  and let $(S_{\alpha_{j}}(t))_{t\geq
0}$ be a global $\alpha_{j}$-times integrated $C_{j}$-semigroup with the integral generator $A_{j}$ ($j\in {\mathbb N}_{N}$). Suppose that $\tilde{X}$ is a closed linear
subspace of $X,$ $m\in {\mathbb N}$ and
$x\in  \bigcap_{j=1}^{N} Z_{1}(A_{j})\cap \tilde{X}.$ Denote by $t\mapsto G_{j}(\delta_{t})x,$ $t\geq 0$ the unique mild
solution of the corresponding Cauchy problem $(ACP_{1}),$  with the operator $A$ replaced by $A_{j}$ therein ($j\in {\mathbb N}_{N}$). Then we say that:
\begin{itemize}
\item[(i)] $x$ is disjoint distributionally near to $0$ for 
$((S_{\alpha_{j}}(t))_{t\geq
0})_{1\leq j\leq N}$ iff
there exists $A\subseteq [0,\infty)$ such that $\overline{Dens}(A)=1$ and
$\lim_{s\rightarrow \infty,s\in A}G_{j}(\delta_{s})x=0$ for all $j\in {\mathbb N}_{N};$ % resp. $\lim_{s\rightarrow \infty,s\in A}G_{2}(\delta_{s})x=0,$
\item[(ii)] $x$ is disjoint distributionally $m$-unbounded for 
$((S_{\alpha_{j}}(t))_{t\geq
0})_{1\leq j\leq N}$ iff
there exists $B\subseteq [0,\infty)$ such that $\overline{Dens}(B)=1$ and
$\lim_{s\rightarrow \infty,s\in B}p_{m}(G_{j}(\delta_{s})x)=0$ for all $j\in {\mathbb N}_{N};$ $x$ is
disjoint distributionally unbounded for the tuple
$((S_{\alpha_{j}}(t))_{t\geq
0})_{1\leq j\leq N}$ iff there exists $q\in {\mathbb N}$ such that
$x$ is disjoint distributionally $q$-unbounded for 
$((S_{\alpha_{j}}(t))_{t\geq
0})_{1\leq j\leq N}$;
\item[(iii)] $x$ is a disjoint $\tilde{X}$-distributionally irregular vector for
$((S_{\alpha_{j}}(t))_{t\geq
0})_{1\leq j\leq N}$
(disjoint distributionally irregular vector for 
$((S_{\alpha_{j}}(t))_{t\geq
0})_{1\leq j\leq N}$ simply, in the case that
$\tilde{X}=X$) iff $x$ is both disjoint distributionally near to $0$ and
disjoint distributionally unbounded.
\end{itemize}
\end{defn}

The following important result is a continuous analogue of \cite[Theorem 4.3]{revista}. It also provides an extension of \cite[Theorem 4.1]{mendoza} for disjoint distributional chaos:

\begin{thm}\label{cvddc}
Suppose that $X_{0}$ is a dense linear subspace of
$X,$ $(T_{j}(t))_{t\geq 0}\subseteq L(X,Y)$ is a strongly continuous operator family for each $j\in {\mathbb N}_{N},$
as well as:
\begin{itemize}
\item[(a)] $\lim_{t\rightarrow \infty}T_{j}(t)x=0,$ $x\in X_{0},$ $j\in {\mathbb N}_{N},$
\item[(b)] there exist $x\in X,$ $m\in {\mathbb N}$ and a set $B\subseteq [0,\infty)$ such that
$\overline{Dens}(B)=1,$ and $\lim_{t\rightarrow \infty ,t\in B}p_{m}(T_{j}(t)x)=\infty$  for each $j\in {\mathbb N}_{N},$ resp.\\
$\lim_{t\rightarrow \infty ,t\in B}\|T_{j}(t)x\|=\infty$ for each $j\in {\mathbb N}_{N},$ if $X$ is a Banach space.
\end{itemize}
Then there exist a dense linear subspace
$S$ of $X$ and a number $\sigma>0$ such that
for each $\epsilon>0$
and for each pair $x,\ y\in S$
of distinct points we have that
$$
\overline{Dens}\Biggl( \bigcap_{j\in {\mathbb N}_{N}} \Bigl\{ s\geq 0 : d_{Y}\bigl(T_{j}(s)x,T_{j}(s)y\bigr)\geq \sigma  \Bigr\}\Biggr)=1
$$
and
$$
\overline{Dens}\Biggl( \bigcap_{j\in {\mathbb N}_{N}} \Bigl\{ s\geq 0 : d_{Y}\bigl(T_{j}(s)x,T_{j}(s)y\bigr)< \epsilon  \Bigr\}\Biggr)=1.
$$
\end{thm}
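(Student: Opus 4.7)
My approach mirrors the discrete case \cite[Theorem 4.3]{revista}. The first move is a standard reduction: by linearity of each $T_{j}(s)$ and translation invariance of the $F$-metric $d_{Y}$, the two density conditions imposed on distinct pairs $(x,y)\in S\times S$ become conditions on $z=x-y$. Thus it suffices to produce a dense linear subspace $S\subseteq X$ and $\sigma>0$ such that every nonzero $z\in S$ is a \emph{disjoint distributionally irregular vector} for $(T_{j}(\cdot))_{j=1}^{N}$: for every $\epsilon>0$,
\[
\overline{Dens}\Bigl(\bigcap_{j\in\mathbb{N}_{N}}\{s\geq 0:d_{Y}(T_{j}(s)z,0)\geq\sigma\}\Bigr)=1\ \text{and}\ \overline{Dens}\Bigl(\bigcap_{j\in\mathbb{N}_{N}}\{s\geq 0:d_{Y}(T_{j}(s)z,0)<\epsilon\}\Bigr)=1.
\]

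For the ``$\geq\sigma$'' side I would use (b) directly. Let $u$ be its witness; for any $x_{0}\in X_{0}$ and any nonzero $\mu\in\mathbb{K}$, the reverse triangle inequality
\[
p_{m}^{Y}(T_{j}(t)(x_{0}+\mu u))\geq|\mu|\,p_{m}^{Y}(T_{j}(t)u)-p_{m}^{Y}(T_{j}(t)x_{0})
\]
together with (a) forces the left-hand side to $\infty$ as $t\to\infty$, $t\in B$, simultaneously for every $j\in\mathbb{N}_{N}$. Exploiting the elementary lower bound $d_{Y}(y,0)\geq 2^{-m}\,p_{m}^{Y}(y)/(1+p_{m}^{Y}(y))$, this secures the ``$\geq\sigma$'' condition for every such $x_{0}+\mu u$ with a single $\sigma>0$ depending only on $m$. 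The essential virtue of hypothesis (b) is that the \emph{same} density-$1$ set $B$ works for all $j$, which is exactly what disjointness demands.

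The dense subspace itself will be produced by the inductive perturbation scheme of \cite[Theorem 4.3]{revista}. Fix a countable dense sequence $(x_{k})\subseteq X_{0}$; I would construct $y_{k}=x_{k}+\lambda_{k}u_{k}$ with $d(y_{k},x_{k})<1/k$, where each $u_{k}$ is a suitable perturbation of $u$ produced via strong continuity of the $T_{j}(\cdot)$, and $\lambda_{k}\to 0$ is chosen carefully. At step $k$ one controls the orbits $T_{j}(\cdot)\bigl(\sum_{i\leq k}c_{i}y_{i}\bigr)$ for every rational coefficient vector, arranging that (i) the ``$<\epsilon$'' condition holds simultaneously for every $j$ on a common density-$1$ subset of $[0,\infty)$ (using (a) together with $\lambda_{k}\to 0$), and (ii) the unboundedness on $B$ is not cancelled, so the ``$\geq\sigma$'' lower bound survives. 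Taking $S:=\operatorname{span}\{y_{k}\}$ then yields a dense linear subspace (because $y_{k}\to x_{k}$) in which every nonzero element is disjoint distributionally irregular with uniform $\sigma$.

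The main obstacle is maintaining \emph{simultaneity across all $N$ operators} with a single fixed $\sigma$ throughout the induction: the ``$<\epsilon$'' half is sensitive to cancellations among the $y_{k}$'s and must be renegotiated $j$-by-$j$ at each step, while the ``$\geq\sigma$'' half is handed over for free by the shared set $B$ from (b). The continuous-time setting adds only cosmetic bookkeeping over the discrete proof: any density-$1$ subset of $[0,\infty)$ can be discretized via a density-$1$ subsequence in $\mathbb{N}$ (and thickened back using strong continuity on compact intervals), so the discrete construction of \cite[Theorem 4.3]{revista} transfers essentially line by line.
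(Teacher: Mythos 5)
Your overall strategy coincides with the paper's: reduce, via linearity of $T_{j}(s)$ and translation invariance of $d_{Y}$, to producing a dense linear subspace $S$ every nonzero element of which is a disjoint distributionally irregular vector (distributionally unbounded in $p_{m}^{Y}$ along a common density-one set, distributionally near $0$ along another), with the uniform $\sigma$ extracted from the elementary bound $d_{Y}(y,0)\geq 2^{-m}p_{m}^{Y}(y)/(1+p_{m}^{Y}(y))$. The observation that hypothesis (b) supplies a single set $B$ valid for all $j$ is indeed exactly the disjointness mechanism the paper exploits, and your reverse-triangle-inequality argument for single vectors of the form $x_{0}+\mu u$ is correct.

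The difficulty is that the step you yourself flag as ``the main obstacle'' is where the entire content of the theorem lives, and the construction you sketch does not overcome it. Setting $y_{k}=x_{k}+\lambda_{k}u_{k}$ with each $u_{k}$ a perturbation of the single witness $u$ from (b) cannot produce a \emph{linear subspace} of distributionally unbounded vectors: a combination $\sum_{i}c_{i}y_{i}=\sum_{i}c_{i}x_{i}+\sum_{i}c_{i}\lambda_{i}u_{i}$ may have $\sum_{i}c_{i}\lambda_{i}u_{i}$ equal to, or arbitrarily close to, $0$, and then the remaining part lies essentially in $X_{0}$, so its orbits tend to $0$ by (a) and the ``$\geq\sigma$'' half fails for that combination; controlling only rational coefficient vectors does not repair this, since a uniform $\sigma$ over all scalar combinations is required. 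The paper (following \cite[Theorem 15]{2013JFA} and \cite[Theorem 4.1]{mendoza}) uses a genuinely different device: after renorming so that $p_{k}^{Y}(T_{j}(t)x)\leq p_{k+\lceil t\rceil}(x)$, one selects $x_{k}\in X_{0}$ with $p_{k}(x_{k})\leq 1$ whose orbits are quantifiably large ($>k2^{k}$ in $p_{1}$, simultaneously for all $j$) on a subset of $[1,t_{k}]$ of measure at least $t_{k}(1-k^{-2})$ and quantifiably small against the previously chosen $x_{l}$, and then forms block sums over index sets separated by the gap condition $r_{s+1}\geq 1+r_{s}+\lceil t_{r_{s}+1}\rceil$; the renormed estimate guarantees that inside each time window exactly one summand of any given linear combination dominates, which is precisely what makes every nonzero element of the resulting span unbounded with a single $\sigma$. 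Without this (or an equivalent separation-of-windows mechanism) your $S=\operatorname{span}\{y_{k}\}$ is not justified. A secondary point: passing between continuous and discrete time by ``discretizing a density-one subset of $[0,\infty)$ and thickening back'' requires a quantitative local-equicontinuity estimate to preserve the lower bounds $p_{1}^{Y}(T_{j}(t)x)>k2^{k}$ on whole intervals; the paper sidesteps this by running the density estimates directly in the continuous parameter.
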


\begin{proof}
The proof is very similar to those of \cite[Theorem 15]{2013JFA} and \cite[Theorem 4.1]{mendoza}, so that we will only outline the main points of the proof.
Consider first the case in which $X$ and $Y$ are Frech\'et spaces.
If so, the family $(T_{j}(t))_{t\geq 0}\subseteq L(X,Y)$
is locally equicontinuous for all $j\in {\mathbb N}_{N}.$ Hence, for
every $l,\ n\in {\mathbb N},
$ there exist $c_{l,n}>0$ and $a_{l,n}\in {\mathbb N}$ such that $p_{l}^{Y}(T_{j}(t)x)\leq c_{l,n}p_{a_{l,n}}(x),$ $x\in X,$ $t\in [0,n],$ $j\in {\mathbb N}_{N}.$
Suppose, for the time being, that:
\begin{equation}\label{saban}
p_{k}^{Y}(T_{j}(t)x)\leq p_{k+\lceil t \rceil}(x),\quad x\in X,\ t\geq 0,\ k\in {\mathbb N},\ j\in {\mathbb N}_{N}.
\end{equation}
We may assume that
$m=1.$
Then there exist a sequence $(x_{k})_{k\in {\mathbb N}}$ in $X_{0}$ such
that $p_{k}(x_{k})\leq 1,$ $k\in {\mathbb N}$ and a strictly increasing sequence
of positive real numbers $(t_{k})_{k\in {\mathbb N}}$ tending to infinity such that:
$$
\overline{Dens}\Bigl( \Bigl\{ 1\leq s \leq t_{k} : p_{1}\bigl(T_{j}(s)x_{k}\bigr) >k2^{k} \Bigr \} \Bigr)\geq t_{k}\Bigl(1-k^{-2} \Bigr)
$$
and
$$
\overline{Dens}\Bigl( \Bigl\{ 1\leq s \leq t_{k} : p_{k}\bigl(T_{j}(s)x_{l}\bigr) < k^{-1} \Bigr \} \Bigr)\geq t_{k}\Bigl(1-k^{-2} \Bigr),\quad
l=1,\cdot \cdot \cdot,k-1,
$$
for any $j\in {\mathbb N}_{N}.$
Further on, it is clear that there is a strictly increasing sequence $(r_{s})_{s\in {\mathbb N}}$ of positive integers satisfying that:
$$
r_{s+1}\geq 1+r_{s}+\lceil t_{r_{s}+1} \rceil,\quad s\in {\mathbb N}.
$$
Arguing as in \cite[Theorem 15]{2013JFA},
we get that there exists a dense linear subspace $S$ of $X$ such that, for every $x\in S,$ there exist two sets $A_{x},\ B_{x}\subseteq [0,\infty)$
such that $\overline{Dens}(A)=\overline{Dens}(B)=1,$ $\lim_{t\rightarrow \infty,t\in A_{x}}T_{j}(t)x=0$ and
$\lim_{t\rightarrow \infty,t\in B_{x}}p_{1}(T_{j}(t)x)=\infty .$ Now the final conlusion of theorem follows as in the discrete case. Finally, a few words about the process of renorming.
Introducing recursively the following fundamental system of increasing seminorms $p_{n}'(\cdot)$ ($n\in {\mathbb N}$) on $X:$
\begin{align*}
& p_{1}'(x)\equiv  p_{1}(x),\quad x\in X,\\
& p_{2}'(x)\equiv p_{1}'(x)+c_{1,1}p_{a_{1,1}}(x)+p_{2}(x),\quad x\in X,\\
& \cdot \cdot \cdot \\
& p_{n+1}'(x)\equiv p_{n}'(x)+c_{1,n}p_{a_{1,n}}(x)+\cdot \cdot \cdot +c_{n,1}p_{a_{n,1}}(x)+p_{n+1}(x),\quad x\in X,\\
& \cdot \cdot \cdot ,
\end{align*}
we may assume without loss of generality that (\ref{saban}) holds; hence, the assertion is proved in the case that $X$ and $Y$ are Frech\'et spaces.
If $X$ or $Y$ is a Banach space, say $Y$,
then we can `renorm' it, by endowing $Y$ with the following
increasing family of seminorms $p_{n}^{Y}(y):=n\|y\|_{Y}$ ($n\in {\mathbb N},$
$y\in Y$), which turns the space $Y$ into a linearly and topologically homeomorphic
Fr\' echet space. This completes the proof of theorem. 
\end{proof}

It is clear that Theorem \ref{cvddc} can be directly applied to strongly continuous semigroups and thus provides a great number of concrete examples of disjoint distributionally chaotic single operators (see \cite[Chapter 3]{knjigaho} and \cite{mendozaa} for more details). In the remaining part of paper, we will primarily examine possible applications of Theorem \ref{cvddc} to the abstract ill-posed abstract Cauchy problems. 

For any injective operator $C\in L(X),$ any closed linear operator $A$ commuting with $C$ and any positive integer $n\in {\mathbb N},$ 
we endow the space $C(D(A^{n}))$ with the following family of seminorms
$p_{m,n}(Cx):=p_{m}(x)+p_{m}(Ax)+\cdot \cdot \cdot +p_{m}(A^{n}x),$ $m\in {\mathbb N},$ $x\in D(A^{n})$ ($n\in {\mathbb N}$). Of course,
if $X$ is a Banach space, then the space $C(D(A^{n}))$ carries the topology induced by the norm
$\|Cx\|_{n}:=\|x\|+\|Ax\|+\cdot \cdot \cdot +\|A^{n}x\|,$ $x\in D(A^{n}).$ Denote
this space by $[C(D(A^{n}))].$ 

Now we will reconsider the assertion of \cite[Theorem 5.4]{mendoza} for disjoint distributional chaos:

\begin{thm}\label{teoremica}
Suppose that $\alpha_{j}\geq 0,$ $t_{j}>0$ and $A_{j}$ subgenerates a global $\alpha_{j}$-times integrated $C_{j}$-semigroup $(S_{\alpha_{j}}(t))_{t\geq 0}$ on $X$ ($j\in {\mathbb N}_{N}$).
Let $n_{j}:= \lceil \alpha_{j} \rceil$ for any $j\in {\mathbb N}_{N},$ let 
$C\in L(X)$ be injective, and let $[R(C)]$ be continuously embedded in the space
$[C_{j}(D(A_{j}^{n_{j}}))]$ for all $j\in {\mathbb N}_{N}.$ Furthermore, suppose that the following conditions hold:
\begin{itemize}
\item[(i)] There exists a dense subset $X_{0}'$ of $[R(C)]$ such that $\lim_{t\rightarrow \infty}G_{j}(\delta_{t})x=0,$ $x\in X_{0}',$ $j\in {\mathbb N}_{N}.$
\item[(ii)] There exist $x\in R(C)$ and $m\in {\mathbb N}$ such that 
$\lim_{t\rightarrow \infty}p_{m}(G_{j}(\delta_{t})x)=\infty,$ $j\in {\mathbb N}_{N}$
($\lim_{t\rightarrow \infty}\|G_{j}(\delta_{t})x\|=\infty ,$ $j\in {\mathbb N}_{N}$ in the case that $X$ is a Banach space).
\end{itemize}
Then $((S_{\alpha_{j}}(t))_{t\geq 0})_{1\leq j\leq N}$ and the operators $G_{1}(\delta_{t_{1}}), \, G_{2}(\delta_{t_{2}}),\cdot \cdot \cdot ,\, G_{N}(\delta_{t_{N}})$ are disjoint distributionally chaotic; if $R(C)$ is dense in $X,$
then $((S_{\alpha_{j}}(t))_{t\geq 0})_{1\leq j\leq N}$ and the operators $G_{1}(\delta_{t_{1}}), \, G_{2}(\delta_{t_{2}}),\cdot \cdot \cdot ,\, G_{N}(\delta_{t_{N}})$ are densely disjoint distributionally chaotic.
\end{thm}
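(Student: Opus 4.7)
The plan is to reduce the theorem to Theorem \ref{cvddc} by taking the Fr\'echet space $[R(C)]$ as the domain and $X$ as the codomain, together with the operator families $T_{j}(t) := G_{j}(\delta_{t})|_{[R(C)]}$ for $j \in \mathbb{N}_{N}$. The continuous embedding $[R(C)] \hookrightarrow [C_{j}(D(A_{j}^{n_{j}}))]$ ensures that $R(C) \subseteq C_{j}(D(A_{j}^{n_{j}})) \subseteq Z_{1}(A_{j})$ for each $j$ (using the characterization of $Z_{1}(A_{j})$ recalled in Subsection \ref{kretinjo-zero} via $\lceil \alpha_{j} \rceil$-times differentiability of $t \mapsto C_{j}^{-1} S_{\alpha_{j}}(t)x$), so each $T_{j}(t)x = G_{j}(\delta_{t})x = (d^{n_{j}}/dt^{n_{j}}) C_{j}^{-1} S_{\alpha_{j}}(t)x$ is well defined for $x \in R(C)$; combining the continuous embedding with the local equicontinuity of $(S_{\alpha_{j}}(t))_{t \geq 0}$ yields that $(T_{j}(t))_{t \geq 0} \subseteq L([R(C)], X)$ is strongly continuous.

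With this reduction in hand, hypothesis (i) gives condition (a) of Theorem \ref{cvddc} with $X_{0} := X_{0}'$, while hypothesis (ii) trivially gives (b) upon choosing $B := [0, \infty)$, whose upper density equals $1$, and taking the same $x$ and $m$ as in (ii). Theorem \ref{cvddc} then produces a dense linear subspace $S$ of $[R(C)]$ and a constant $\sigma > 0$ realising the two density-one conditions. Since $S \subseteq [R(C)] \subseteq \bigcap_{j=1}^{N} Z_{1}(A_{j})$, this is precisely the statement of Definition \ref{to fuck} that $((S_{\alpha_{j}}(t))_{t \geq 0})_{1 \leq j \leq N}$ is disjoint distributionally chaotic, with $(d,\sigma)$-scrambled set $S$. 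If, in addition, $R(C)$ is dense in $X$, then the continuous embedding $[R(C)] \hookrightarrow X$ combined with density of $S$ in $[R(C)]$ forces $S$ to be dense in $R(C)$ for the $X$-topology, and hence dense in $X$, yielding the dense version.

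For the accompanying statement about the individual operators $G_{1}(\delta_{t_{1}}), \ldots, G_{N}(\delta_{t_{N}})$, I would apply the discrete analogue \cite[Theorem 4.3]{revista} of Theorem \ref{cvddc} in the same setting. Using the semigroup identity (\ref{C-DS}), one has $G_{j}(\delta_{t_{j}})^{k} x = G_{j}(\delta_{k t_{j}})x$ for every $x \in Z_{1}(A_{j})$, so hypotheses (i) and (ii) translate verbatim into the discrete assumptions (a) and (b) of that theorem for the sequences $(G_{j}(\delta_{t_{j}})^{k})_{k \in \mathbb{N}}$, yielding the analogous dense disjoint distributionally chaotic conclusion. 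The main obstacle I foresee is the careful justification that $(T_{j}(t))_{t \geq 0}$ is indeed a strongly continuous family in $L([R(C)], X)$; once the continuous embedding hypothesis is combined with the explicit representation $G_{j}(\delta_{t}) = (d^{n_{j}}/dt^{n_{j}}) C_{j}^{-1} S_{\alpha_{j}}(t)$ on $C_{j}(D(A_{j}^{n_{j}}))$, the remainder of the argument is essentially a direct invocation of Theorem \ref{cvddc} and its discrete counterpart.
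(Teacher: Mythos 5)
Your proposal is correct and follows essentially the same route as the paper's proof: reduce to Theorem \ref{cvddc} by viewing $(G_{j}(\delta_{t}))_{t\geq 0}$ as strongly continuous families in $L([R(C)],X)$ via the continuous embedding $[R(C)]\hookrightarrow [C_{j}(D(A_{j}^{n_{j}}))]$ and the inclusion $C_{j}(D(A_{j}^{n_{j}}))\subseteq Z_{1}(A_{j})$, verify (a) and (b) from (i) and (ii), and handle the single operators $G_{j}(\delta_{t_{j}})$ through the identity $G_{j}(\delta_{t_{j}})^{k}x=G_{j}(\delta_{kt_{j}})x$ from (\ref{C-DS}) together with the discrete counterpart of Theorem \ref{cvddc} from \cite{revista}. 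The treatment of the dense case via the comparison of the $[R(C)]$-topology with the topology inherited from $X$ also matches the paper's (terser) argument.
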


\begin{proof}
It is clear that $[R(C)]$ is separable. 
Let us recall that $C_{j}(D(A_{j}^{n_{j}}))\subseteq Z_{1}(A_{j})$ for all $j\in {\mathbb N}_{N};$ furthermore, if $x=C_{j}y\in C_{j}(D(A_{j}^{n_{j}})),$ then for every $t\geq 0$ we have:
\begin{align*}
G_{j}\bigl( \delta_{t}  \bigr)x=S_{\alpha_{j}}(t)A_{j}^{n_{j}}y+\sum \limits_{i=0}^{n_{j}-1}\frac{t^{n_{j}-i-1}}{(n_{j}-i-1)!}C_{j}A^{n_{j}-1-i}y,\quad  j\in {\mathbb N}_{N}.
\end{align*}
Since $[R(C)]$ is continuously embedded in the space
$[C_{j}(D(A_{j}^{n_{j}}))]$ for all $j\in {\mathbb N}_{N},$
we have that, for every $t\geq 0,$ the mapping $G(\delta_{t}) : [R(C)] \rightarrow X$ is linear and continuous.
Furthermore, the family $(G(\delta_{t}))_{t\geq 0}\subseteq L([R(C)] , X)$ is strongly continuous.
We define $T_{j,k}\equiv G(\delta_{kt_{j}}): [R(C)] \rightarrow X$ ($ j\in {\mathbb N}_{N},$ $k\in {\mathbb N}$). Then $((T_{j,k})_{k\in {\mathbb N}})_{1\leq j\leq N}\subseteq  L([R(C)],X)$ and (\ref{C-DS}) yields that $T_{j,k}x=G_{j}(\delta_{t_{j}})^{k}x,$ $x\in R(C).$ 
Now an application of \cite[Theorem 4.4]{revista} yields that
the operators $G_{1}(\delta_{t_{1}}), \, G_{2}(\delta_{t_{2}}),\cdot \cdot \cdot ,\, G_{N}(\delta_{t_{N}})$ are disjoint distributionally chaotic, while an application of Theorem \ref{cvddc} yields that $((S_{\alpha_{j}}(t))_{t\geq 0})_{1\leq j\leq N}$ are disjoint distributionally chaotic. Finally, if
$R(C)$ is dense in $X,$ then it almost trivially follows from the foregoing that $((S_{\alpha_{j}}(t))_{t\geq 0})_{1\leq j\leq N}$ and the operators $G_{1}(\delta_{t_{1}}), \, G_{2}(\delta_{t_{2}}),\cdot \cdot \cdot ,\, G_{N}(\delta_{t_{N}})$ are densely disjoint distributionally chaotic.
\end{proof}

\begin{rem}\label{dusty}
\begin{itemize}
\item[(i)] If $\lambda_{j}\in \rho_{C_{j}}(A_{j})$ for all $j\in {\mathbb N}_{N},$ then the choice $C:=\prod_{j=1}^{N}C_{j}((\lambda_{j}-A_{j})^{-1}C_{j})^{n}$ can be always made.
\item[(ii)] If $\lambda_{j} \in \sigma_{p}(A_{j})$ and $A_{j}x=\lambda_{j} x$ for some $x\in X \setminus \{0\}$ and $j\in {\mathbb N}_{N},$ then $x\in Z_{1}(A_{j})$
and $G_{j}(\delta_{t})x=e^{\lambda_{j} t}x,$ $t\geq 0.$ In particular,
$\lim_{t\rightarrow \infty}G_{j}(\delta_{t})x=0$ if $\lambda \in {\mathbb K}_{-},$ and there exists
$m\in {\mathbb N}$ such that $\lim_{t\rightarrow \infty}p_{m}(G_{j}(\delta_{t})x)=\infty$
($\lim_{t\rightarrow \infty}\|G_{j}(\delta_{t})x\|=\infty$ in the case that $X$ is a Banach space),
if $\lambda \in {\mathbb K}_{+}$. 
\item[(iii)] Assume now that all requirements of Theorem \ref{teoremica} stated before the formulation of (i)-(iii) hold true, as well as that $X_{0}:=\{Cx : (\forall j\in {\mathbb N}_{N})\, (\exists \lambda_{j,-}\in {\mathbb K}_{-})\, A_{j}Cx=\lambda_{j,-}Cx\}.$ Suppose that
\begin{itemize}
\item[(a)] $\tilde{X}:=\overline{X_{0}}^{[R(C)]}$ is non-trivial subspace of $[R(C)],$ and
\item[(b)] there exist a vector $Cx\in \tilde{X}$ and the scalars $\lambda_{j,+}\in {\mathbb K}_{+}$ such that $A_{j}Cx=\lambda_{j,+}Cx$ for all $j\in {\mathbb N}_{N}.$
\end{itemize}
Repeating literally the arguments given in the proof of Theorem \ref{teoremica}, with the spaces $[R(C)]$ and $X_{0}'$ replaced with the spaces $\tilde{X}$ and $X_{0}$ therein, we get that $((S_{\alpha_{j}}(t))_{t\geq 0})_{1\leq j\leq N}$ and the operators $G_{1}(\delta_{t_{1}}), \, G_{2}(\delta_{t_{2}}),\cdot \cdot \cdot ,\, G_{N}(\delta_{t_{N}})$ are disjoint $\tilde{X}$-distributionally chaotic. The question whether we can make a choice $\tilde{X}=R(C)$ has an affirmative answer in the case that the operators $A_{j}$
have nice supplies of eigenfunctions (see e.g. the proof of Desch-Schappacher-Webb criterion for strongly continuous semigroups \cite[Theorem 3.1]{fund},
as well as Example \ref{polinomi} below).
\end{itemize}
\end{rem}

The interested reader may try to formulate an analogue of \cite[Theorem 5.9]{mendoza} for disjoint distributional chaos of entire $C$-regularized groups. 

We proceed by providing two illustrative examples.

\begin{example}\label{polinomi} (\cite{ralphchaos})
Assume that ${\mathbb K}={\mathbb C},$
$\omega_{1},$ $\omega_{2},$ $V_{\omega_{2},\omega_{1}},$ $Q_{j}(z),$
$Q_{j}(B),$ $h_{\mu}$ and $X$ possess the same meaning as in
\cite[Section 5]{ralphchaos}, as well as that the number $L\in {\mathbb N}$ is sufficiently large and takes the role of number $N$ from this section. Let $t_{j}>0$ and let the following two conditions hold:
\begin{itemize}
\item[(A)] there exists a non-empty subset $\Omega'$ of $\mbox{int}(V_{\omega_{2},\omega_{1}})$ which do have a cluster point in $\mbox{int}(V_{\omega_{2},\omega_{1}})$ and satisfies that, for every $z\in \Omega'$ and for every $j\in {\mathbb N}_{N},$ we have $Q_{j}(z)\in {\mathbb C}_{-};$  
\item[(B)] there exists $z\in \mbox{int}(V_{\omega_{2},\omega_{1}})$ such that, for every $j\in {\mathbb N}_{N},$ we have $Q_{j}(z)\in {\mathbb C}_{+}.$  
\end{itemize}
Then $\pm Q_{j}(B)h_{\mu}=\pm Q_{j}(\mu)h_{\mu},$
$e^{-(-B^{2})^{L}}h_{\mu}=e^{-(-\mu^{2})^{L}}h_{\mu},$ $\mu \in
\mbox{int}(V_{\omega_{2},\omega_{1}})$ and the operator $Q_{j}(B)$ is the integral generator of the
$C\equiv (e^{-(-z^{2})^{L}})(B)$-regularized semigroup $(W_{Q_{j}}(t)\equiv z\mapsto e^{tQ_{j}(z)}e^{-(-z^{2})^{L}})(B))_{t\geq 0}$
on $X$ ($j\in {\mathbb N}_{N}$). Furthermore, the set $R(C)$ is dense in $X.$ The validity of (A)-(B) yields that Theorem \ref{teoremica} and Remark \ref{dusty}(iii) can be applied, showing that
the $C$-regularized semigroups $((W_{Q_{j}}(t))_{t\geq 0})_{1\leq j\leq N}$ and
the operators
$e^{t_{1}Q(B)},\, e^{t_{2}Q(B)},\cdot \cdot \cdot, e^{t_{N}Q(B)}$ are densely disjoint distributionally chaotic. 
\end{example}

\begin{example}\label{cosine-functions} (cf. \cite[Example 2.13]{ejmaa})
Let us assume that $\zeta \geq 0,$ $-A\notin L(X)$, $-A$ generates an exponentially equicontinous $\zeta$-times integrated cosine function $(C_{\zeta}(t))_{t\geq 0}$, $N\in {\mathbb N},$ $N\geq 2$ and $P_{j}(z)=\sum_{i=0}^{n_{j}}a_{i,j}z^{i}$ is a non-zero complex polynomial with $a_{n_{j},j}>0$
($j\in {\mathbb N}_{N}$). Assume, further, that there are an open connected subset $\Omega$ of $\mathbb{C}$ and an analytic mapping $f:\Omega\to X\setminus \{0\}$ such that $\sigma_p(-A)\supseteq\Omega $ and
$f(\lambda)\in N(-A-\lambda)\setminus\{0\}$, $\lambda\!\in\!\Omega $ (e.g., let $a>0,$ let $\rho(x):=e^{-a|x|},$ $x\in {\mathbb R},$
$ X:=
L^{p}_{\rho}({\mathbb R}),$
$
D(B):=\{ f\in X \ | \ f(\cdot) \mbox{ is loc. abs. continuous, }\ f^{\prime} \in E \bigr\}$ and $Af:=f^{\prime},$ $f\in D(B);$ 
then $A$ generates $C_{0}$-group on $X$ and the above holds with $A=-B^{2},$ $\Omega =\{z^{2} : |\Re z|<a\}$ and $f (z^{2})=e^{z\cdot}$ for $|\Re z|<a;$ cf. \cite{fund} for the notion).

Set ${\mathcal A} :=\bigl(\begin{smallmatrix}0&I\\ -A &0\end{smallmatrix}\bigr),$ and suppose further that $\Omega'$ is a non-empty open connected subset of ${\mathbb C}$ such that $\lambda^{2}\in \Omega$ for all $\lambda \in {\Omega}'.$ Define $F : \Omega' \rightarrow (X\times X)\setminus \{(0,0)\}$
by $F(\lambda):=[f(\lambda^{2}) \ \lambda f(\lambda^{2})]^{T},$ $\lambda \in \Omega'.$ Then we know that $F(\cdot)$ is analytic, $\sigma_p({\mathcal A})\supseteq\Omega' $ and
$F(\lambda)\in N({\mathcal A}-\lambda)\setminus \{(0,0)\}$, $\lambda\!\in\!\Omega' .$ 
Further on, the operator
${\mathcal A}$ generates an exponentially equicontinuous $(\zeta+1)$-times integrated semigroup $(S_{\zeta+1}(t))_{t\geq 0}$
in $X\times X,$ which is given by
$$
S_{\zeta+1}(t):=\begin{pmatrix}
\int_0^tC_\zeta(s)\,ds &\int_0^t(t-s)C_{\zeta}(s)\,ds\\
C_\zeta(t)-g_{\zeta+1}(t)C\; &\int_0^tC_\zeta(s)\,ds\end{pmatrix},
\;\; t\geq 0.
$$
On the other hand, the operator ${\mathcal A}^{2}$ generates an exponentially equicontinuous, analytic $(\zeta/2)$-times integrated semigroup of angle  $\pi/2.$ Set $Q_{1}(z):=z$ and $Q_{j}(z):=-P_{j}(-z^{2})$ ($z\in {\mathbb C},$ $2\leq j\leq N$), as well as $A_{j}:=Q_{j}({\mathcal A}).$ Then the operator $A_{j}$ generates an exponentially equicontinuous, analytic $\eta$-times integrated semigroup $(S_{\eta}^{j}(t))_{t\geq 0}$ of angle $\pi/2,$ for $2\leq j\leq N$.
Suppose that the conditions (A) and (B) hold with the set $\mbox{int}(V_{\omega_{2},\omega_{1}})$ replaced by the set $\Omega'.$
These conditions ensure that Theorem \ref{teoremica} and Remark \ref{dusty}(iii) are applicable, so that
the integrated semigroups
$(S_{\zeta+1}(\cdot), (S_{\eta}^{j}(\cdot))_{2\leq j\leq N})$ are densely disjoint distributionally chaotic, which also holds for corresponding tuples of single operators.
\end{example}

Finally, at the end of this section, we would like to propose an interesting problem for our readers:

\begin{example}\label{enenminusjedan} (cf. also \cite[Example 3.1.35(i)]{knjigah},
\cite[Example 38]{filomat} and \cite[Example 5.12, Example 5.13]{mendoza}).
Let us assume that $n\in {\mathbb N},$ 
$\rho(t):=\frac{1}{t^{2n}+1},\ t\in {\mathbb R},$ $Af:=f^{\prime},$
$D(A):=\{f\in C_{0,\rho}({\mathbb R}) : f^{\prime} \in
C_{0,\rho}({\mathbb R})\},$ $X_{n}:=(C_{0,\rho}({\mathbb
R}))^{n+1},$ $D(A_{n}):=D(A)^{n+1}$ and $A_{n}(f_{1},\cdot \cdot
\cdot ,f_{n+1}):=(Af_{1}+Af_{2},Af_{2}+Af_{3},\cdot \cdot \cdot ,
Af_{n}+Af_{n+1},Af_{n+1}),$ $(f_{1},\cdot \cdot \cdot, f_{n+1}) \in
D(A_{n}).$ Then we already know that $\pm A_{n}$
generate global polynomially bounded $n$-times integrated semigroups
$(S_{n,\pm}(t))_{t\geq 0},$ and neither $A_{n}$ nor $-A_{n}$
generates a local $(n-1)$-times integrated semigroup. By \cite[Proposition 2.1.17]{knjigah}, the above implies that $A_{n}^{2}$ generates a polynomially bounded $n$-times integrated
cosine function $(C_{n}(t)\equiv1/2(S_{n,+}(t)+S_{n,-}(t)))_{t\geq 0}.$ Due to \cite[Corollary 2.4.9]{knjigah}, we have that $A_{n}^{2}$ generates a polynomially bounded $(n/2)$-times integrated
semigroup $(S_{n/2}(t))_{t\geq 0}$. We would like to ask whether $(S_{n/2}(t))_{t\geq 0}$ is densely disjoint distributionally chaotic and whether $(S_{n,\pm}(t))_{t\geq 0}$ and $(S_{n/2}(t))_{t\geq 0}$ are densely disjoint distributionally chaotic.
\end{example}

\section{Disjoint distributionally chaotic properties of abstract fractional PDEs}\label{d-MLOsz}

Let us recall that $\zeta \in (0,2)\setminus \{1\}.$ We start this section by providing analogues of Definition \ref{to fuck} and Definition \ref{dccfa} for fractional resolvent families:

\begin{defn}\label{to fuck-zer}
Let $\alpha_{j}\geq 0,$ let $C_{j}\in L(X)$ be injective for all $j\in {\mathbb N}_{N}$  and let $(R_{j}(t))_{t\geq
0}$ be a global $\zeta$-times $C_{j}$-regularized resolvent family with the integral generator $A_{j}$ ($j\in {\mathbb N}_{N}$). 
Suppose that $\tilde{X}$ is a closed linear
subspace of $X.$ Let $Z_{j,\zeta}(A_{j})$ the set
consisting of those vectors $x\in X$ such that
$R_{j}(t)x\in R(C_{j}),$ $t\geq 0$ and the mapping $t\mapsto
C^{-1}_{j}R_{j}(t)x,$ $t\geq 0$ is continuous. Denote by $t\mapsto C_{j}^{-1}R_{j}(t)x,$ $t\geq 0$ the unique mild
solution of the corresponding Cauchy problem \eqref{bez}, with the operator $A$ replaced by $A_{j}$ therein ($j\in {\mathbb N}_{N}$). Then we say that $((R_{j}(t))_{t\geq
0})_{1\leq j\leq N}$ are
disjoint
$\tilde{X}$-distributionally chaotic, $(d,\tilde{X})$-distributionally chaotic in short, iff there exist an uncountable
set $S\subseteq \bigcap_{j=1}^{N} Z_{j,\zeta}(A_{j}) \cap \tilde{X}$ and
$\sigma>0$ such that for each $\epsilon>0$ and for each pair $x,\
y\in S$ of distinct points we have that for each $j\in {\mathbb N}_{N}$ and $t\geq 0$ 
we have that
\begin{align*}
\begin{split}
& \overline{dens}\Biggl( \bigcap_{j\in {\mathbb N}_{N}} \bigl\{t\geq 0 :
d_{Y}\bigl(C^{-1}_{j}R_{j}(t)x,C^{-1}_{j}R_{j}(t)y\bigr)\geq \sigma \bigr\}\Biggr)=1,\mbox{ and }
\\ 
& \overline{dens}\Biggl( \bigcap_{j\in {\mathbb N}_{N}} \bigl\{t\geq 0 : d_{Y}\bigl(C^{-1}_{j}R_{j}(t)x,C^{-1}_{j}R_{j}(t)y\bigr)
<\epsilon \bigr\}\Biggr)=1.
\end{split}
\end{align*}

The sequence $((R_{j}(t))_{t\geq
0})_{1\leq j\leq N}$ is said to be densely
$(d,\tilde{X})$-distributionally chaotic iff $S$ can be chosen to be dense in $\tilde{X}.$
The set $S$ is said to be $(d,\sigma_{\tilde{X}})$-scrambled set\index{ $\sigma_{\tilde{X}}$-scrambled set} ($(d,\sigma)$-scrambled set in the case\index{$\sigma$-scrambled set}
that $\tilde{X}=X$) of the tuple $((R_{j}(t))_{t\geq
0})_{1\leq j\leq N}$;  in the case that
$\tilde{X}=X,$ then we also say that the sequence  $((R_{j}(t))_{t\geq
0})_{1\leq j\leq N}$ is (densely) disjoint distributionally chaotic, (densely) $d$-distributionally chaotic in short.
\end{defn}

\begin{defn}\label{dccfa-zer}
Let $\alpha_{j}\geq 0,$ let $C_{j}\in L(X)$ be injective for all $j\in {\mathbb N}_{N}$  and let $(R_{j}(t))_{t\geq
0}$ be a global $\zeta$-times $C_{j}$-regularized resolvent family with the integral generator $A_{j}$ ($j\in {\mathbb N}_{N}$). 
Suppose that $\tilde{X}$ is a closed linear
subspace of $X.$ Let $Z_{j,\zeta}(A_{j})$ be defined as above, and let $t\mapsto C_{j}^{-1}R_{j}(t)x,$ $t\geq 0$ be the unique mild
solution of the corresponding Cauchy problem \eqref{bez}, with the operator $A$ replaced by $A_{j}$ therein ($j\in {\mathbb N}_{N}$). Let $m\in {\mathbb N}$ and
$x\in  \bigcap_{j=1}^{N} Z_{j,\zeta}(A_{j})\cap \tilde{X}.$ Then we say that:
\begin{itemize}
\item[(i)] $x$ is disjoint distributionally near to $0$ for $((R_{j}(t))_{t\geq
0})_{1\leq j\leq N}$ iff
there exists $A\subseteq [0,\infty)$ such that $\overline{Dens}(A)=1$ and
$\lim_{s\rightarrow \infty,s\in A}C_{j}^{-1}R_{j}(s)x=0$ for all $j\in {\mathbb N}_{N};$ % resp. $\lim_{s\rightarrow \infty,s\in A}G_{2}(\delta_{s})x=0,$
\item[(ii)] $x$ is disjoint distributionally $m$-unbounded for $((R_{j}(t))_{t\geq
0})_{1\leq j\leq N}$ iff
there exists a set $B\subseteq [0,\infty)$ satisfying that $\overline{Dens}(B)=1$ and
$$
\lim_{s\rightarrow \infty,s\in B}p_{m}\bigl(C_{j}^{-1}R_{j}(s)x\bigr)=0
$$ 
for all $j\in {\mathbb N}_{N};$ $x$ is
disjoint distributionally unbounded for the tuple $((R_{j}(t))_{t\geq
0})_{1\leq j\leq N}$ iff there exists $q\in {\mathbb N}$ such that
$x$ is disjoint distributionally $q$-unbounded for $((R_{j}(t))_{t\geq
0})_{1\leq j\leq N}$;
\item[(iii)] $x$ is a disjoint $\tilde{X}$-distributionally irregular vector for
$((R_{j}(t))_{t\geq
0})_{1\leq j\leq N}$
(disjoint distributionally irregular vector for $((R_{j}(t))_{t\geq
0})_{1\leq j\leq N}$ simply, in the case that
$\tilde{X}=X$) iff $x$ is both disjoint distributionally near to $0$ and
disjoint distributionally unbounded.
\end{itemize}
\end{defn}

Concerning disjoint distributional chaos of abstract time-fractional differential equations, the theoretical aspects are basically the same as for the abstract differential equations of first order and almost anything reasonable lies on possibal applications of Theorem \ref{cvddc}. Here we will formulate only one simple result regarding this theme:

\begin{thm}\label{sjajno}
Let $\alpha_{j}\geq 0,$ $t_{j}>0,$ let $C_{j}\in L(X)$ be injective for all $j\in {\mathbb N}_{N},$ and let $(R_{j}(t))_{t\geq
0}$ be a global $\zeta$-times $C_{j}$-regularized resolvent family with the integral generator $A_{j}$ ($j\in {\mathbb N}_{N}$). Let for each $i,\ j\in {\mathbb N}_{N}$ such that $i\neq j,$ we have $C_{i}A_{j}\subseteq A_{j}C_{i},$
$C_{i} R_{j}(t)=R_{j}(t)C_{i},$ $t\geq 0$ and $R_{j}(t)A_{i}\subseteq A_{i}R_{j}(t),$ $t\geq 0.$ Set $C:=\prod_{j=1}^{N}C_{j}.$ Then $({\mathrm R}_{j}(t)\equiv R_{j}(t)\prod_{1\leq i\leq N,i\neq j}C_{i})_{t\geq
0}$ is a global $\zeta$-times $C$-regularized resolvent family with the integral generator $A_{j}$ ($j\in {\mathbb N}_{N}$).
Suppose, further, that there exists a dense linear subspace $X_{0}$ of $X$ such that the following holds:
\begin{itemize}
\item[(a)] $\lim_{t\rightarrow \infty}{\mathrm R}_{j}(t)x=0,$ $x\in X_{0},$ $j\in {\mathbb N}_{N},$
\item[(b)] there exist $x\in X$ and $m\in {\mathbb N}$ such that
$\lim_{t\rightarrow \infty }p_{m}({\mathrm R}_{j}(t)x)=\infty$ for each $j\in {\mathbb N}_{N},$ resp.
$\lim_{t\rightarrow \infty }\|{\mathrm R}_{j}(t)x\|=\infty$ for each $j\in {\mathbb N}_{N},$ if $X$ is a Banach space.
\end{itemize}
Then $(({\mathrm R}_{j}(t))_{t\geq 0})_{1\leq j\leq N}$ and the operators $C^{-1}{\mathrm R}_{1}(t_{1}), C^{-1}{\mathrm R}_{2}(t),\cdot \cdot \cdot ,C^{-1}{\mathrm R}_{N}(t_{N})$ are disjoint distributionally chaotic; if, moreover, $R(C)$ is dense in $X,$
then\\ $(({\mathrm R}_{j}(t))_{t\geq 0})_{1\leq j\leq N}$ and the operators $C^{-1}{\mathrm R}_{1}(t_{1}), \, C^{-1}{\mathrm R}_{2}(t_{2}),\cdot \cdot \cdot ,\, C^{-1}{\mathrm R}_{N}(t_{N})$ are densely disjoint distributionally chaotic.
\end{thm}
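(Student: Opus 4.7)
The plan is to proceed in three steps: a structural verification that the new family $({\mathrm R}_{j}(t))_{t\geq 0}$ inherits the $\zeta$-times $C$-regularized resolvent structure with generator $A_{j}$; a direct application of Theorem \ref{cvddc} to the continuous-time tuple; and finally a discrete analog for the tuple of single operators.

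\textbf{Structural verification.} I would first check items (i)-(iii) of Definition \ref{1.1} for $({\mathrm R}_{j}(t))_{t\geq 0}$ with common regularizer $C=\prod_{i=1}^{N}C_{i}$. Injectivity of $C$ is immediate, since each $C_{i}$ is injective and the $C_{i}$ pairwise commute (setting $t=0$ in $C_{i}R_{j}(t)=R_{j}(t)C_{i}$ gives $C_{i}C_{j}=C_{j}C_{i}$). Strong continuity and items (i)-(ii) follow routinely from the assumed cross-commutations. The key step is (iii): starting from $R_{j}(t)x=C_{j}x+\int_{0}^{t}g_{\zeta}(t-s)A_{j}R_{j}(s)x\, ds$ for $x\in D(A_{j})$ and applying the bounded operator $\prod_{i\neq j}C_{i}$, the relation $C_{i}A_{j}\subseteq A_{j}C_{i}$ lets this product pass through $A_{j}$ and inside the Bochner integral, producing
\[
{\mathrm R}_{j}(t)x=Cx+\int_{0}^{t}g_{\zeta}(t-s)A_{j}{\mathrm R}_{j}(s)x\, ds.
\]
The integral generator of the new family is then $A_{j}$ by the standard uniqueness/extension argument.

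\textbf{Continuous tuple.} Next I would invoke Theorem \ref{cvddc} directly with $Y=X$, $T_{j}(t):={\mathrm R}_{j}(t)\in L(X)$, and $X_{0}$ as in the hypothesis. Conditions (a)-(b) of Theorem \ref{cvddc} follow immediately from (a)-(b) of the present theorem (for (b) one takes $B=[0,\infty)$). The theorem then supplies a dense subspace $S\subseteq X$ and $\sigma>0$ realizing the density-one conditions for $d_{X}({\mathrm R}_{j}(\cdot)x,{\mathrm R}_{j}(\cdot)y)$ along the joint intersection over $j\in {\mathbb N}_{N}$. To match Definition \ref{to fuck-zer} for the resolvent tuple, I would replace $S$ by $C(S)$ (or by $S\cap R(C)$, which is dense when $R(C)$ is dense) and invoke the identity $C^{-1}{\mathrm R}_{j}(t)Cy={\mathrm R}_{j}(t)y$: this simultaneously places the scrambled set inside $R(C)\subseteq \bigcap_{j}Z_{j,\zeta}(A_{j})$ and converts the estimates into the $C^{-1}{\mathrm R}_{j}$-form required there.

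\textbf{Single operators, the main obstacle.} By Definition \ref{DC-unbounded-fric-DISJOINT}, disjoint distributional chaos of the tuple $(C^{-1}{\mathrm R}_{j}(t_{j}))_{1\leq j\leq N}$ is really a claim about the discrete sequences $T_{j,k}:=(C^{-1}{\mathrm R}_{j}(t_{j}))^{k}$ on $[R(C)]$. The main obstacle here is the absence of a semigroup law when $\zeta\neq 1$: unlike in Theorem \ref{teoremica}, $(C^{-1}{\mathrm R}_{j}(t_{j}))^{k}\neq C^{-1}{\mathrm R}_{j}(kt_{j})$ in general, so the single-operator chaos does not transfer mechanically from the continuous tuple. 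My plan is to apply a discrete analog of Theorem \ref{cvddc}, namely \cite[Theorem 4.3]{revista}, to $(T_{j,k})_{k\in {\mathbb N}}$, verifying its discrete (a) and (b) by producing a dense supply of common eigenvectors of the $A_{j}$ on which the Mittag-Leffler functional calculus yields the explicit formula $(C^{-1}{\mathrm R}_{j}(t_{j}))^{k}x=E_{\zeta}(\lambda t_{j}^{\zeta})^{k}x$, and then selecting $\lambda$'s whose scalar iterates either decay (for (a)) or blow up (for (b)), in the spirit of Remark \ref{dusty}(ii). The densely chaotic conclusion when $R(C)$ is dense in $X$ then follows because, in both parts, the scrambled set can be chosen dense in $X$ rather than merely in $R(C)$.
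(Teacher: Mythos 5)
Your structural verification and your treatment of the continuous-time tuple are correct and essentially coincide with the paper's argument: the paper likewise dismisses the resolvent-family verification as immediate from Definition \ref{1.1} and then applies Theorem \ref{cvddc}, the only cosmetic difference being that the paper takes $T_{j}(t):=C^{-1}{\mathrm R}_{j}(t)$ acting from $[R(C)]$ into $X$, whereas you take $T_{j}(t):={\mathrm R}_{j}(t)$ on $X$ and then conjugate the scrambled set by $C$ via $C^{-1}{\mathrm R}_{j}(t)Cy={\mathrm R}_{j}(t)y$; these are the same argument up to the topological isomorphism $C:X\rightarrow [R(C)]$. Your observation that $R(C)\subseteq \bigcap_{j}Z_{j,\zeta}(A_{j})$ and that $C(S)$ is dense in $X$ when $R(C)$ is, correctly handles the density transfer.

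For the single-operator claim your diagnosis is sharper than the paper's own proof, which consists of a single sentence invoking Theorem \ref{cvddc} and never addresses the point you raise: for $\zeta\in(0,2)\setminus\{1\}$ there is no analogue of \eqref{C-DS}, so $(C^{-1}{\mathrm R}_{j}(t_{j}))^{k}$ is not $C^{-1}{\mathrm R}_{j}(kt_{j})$ and the discrete statement does not follow from the continuous one as it did in Theorem \ref{teoremica}. However, your proposed repair does not close the gap under the stated hypotheses: conditions (a) and (b) of Theorem \ref{sjajno} control only the continuous orbits $t\mapsto {\mathrm R}_{j}(t)x$ and say nothing about the iterates $(C^{-1}{\mathrm R}_{j}(t_{j}))^{k}x$, and the theorem nowhere assumes the existence of a dense supply of common eigenvectors of the $A_{j}$ on which the Mittag--Leffler calculus $C^{-1}R_{j}(t)x=E_{\zeta}(\lambda t^{\zeta})x$ would be available. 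Your route therefore proves the single-operator conclusion only under additional spectral assumptions of the kind present in Example \ref{un}, not from (a)--(b) alone. In short: you have correctly reproduced everything the paper actually proves, correctly located the step the paper leaves unjustified, but your substitute argument changes the hypotheses rather than deriving the claim from the ones given.
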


\begin{proof}
Since for each $i,\ j\in {\mathbb N}_{N}$ such that $i\neq j,$ we have $C_{i}A_{j}\subseteq A_{j}C_{i},$
$C_{i} R_{j}(t)=R_{j}(t)C_{i},$ $t\geq 0$ and $R_{j}(t)A_{i}\subseteq A_{i}R_{j}(t),$ $t\geq 0,$ it follows immediately from definition that  $({\mathrm R}_{j}(t))_{t\geq
0}$ is a global $\zeta$-times $C$-regularized resolvent family with the integral generator $A_{j}$ ($j\in {\mathbb N}_{N}$), where $C$ is defined as above. Now the final conclusion follows from Theorem \ref{cvddc}, by considering the sequence $((C^{-1}{\mathrm R}_{j}(t))_{t\geq 0})_{1\leq j\leq N}$ of strongly continuous families consisting of linear continuous mappings between the spaces $[R(C)]$ and $X.$
\end{proof}

Now we will provide two illustrative applications of Theorem \ref{sjajno}, in which the regularizing operator $C$ is the identity operator (for general $C$, we can modify Example \ref{polinomi}; see also \cite[Example 2.5(iv)]{dynamo}):

\begin{example}\label{un}
(\cite{fund}, \cite{dynamo}) Let $a,\ b,\ c>0,$ $\zeta \in (1,2),$ $c<\frac{b^{2}}{2a}<1$ and
$$
\Lambda :=\Biggl\{ \lambda \in {\mathbb C} :
\Biggl|\lambda-c+\frac{b^{2}}{4a}\Biggr|\leq \frac{b^{2}}{4a},\ \Im (\lambda)
\neq 0 \mbox{ if }\Re (\lambda) \leq c-\frac{b^{2}}{4a} \Biggr\}.
$$
Then the operator $-A$ with domain $D(-A)=\{f\in
W^{2,2}([0,\infty)) : f(0)=0\},$ generates an analytic strongly
continuous semigroup of angle $\frac{\pi}{2}$ in the space
$X=L^{2}([0,\infty));$ the same holds in the case that the
operator $-A$ acts on $X=L^{1}([0,\infty))$ with domain
$D(-A)=\{f\in W^{2,1}([0,\infty)) : f(0)=0\}.$ In both cases, $-\Lambda \subseteq \sigma_{p}(A).$ Suppose that $\theta \in (\zeta \frac{\pi}{2}-\pi,\pi-\zeta
\frac{\pi}{2})$ and $P_{j}(z)=\sum \limits_{l=0}^{n}a_{l,j}z^{l}$ is a
non-constant complex polynomial such that $a_{l,n}>0$ and the following two conditions hold:
\begin{itemize}
\item[(A)'] there exists a non-empty subset $\Omega'$ of $-\Lambda$ which do have a cluster point in $-\Lambda$ and satisfies that, for every $z\in \Omega'$ and for every $j\in {\mathbb N}_{N},$ we have $-e^{i\theta}P_{j}(z)\notin \overline{\Sigma_{\zeta \pi/2}};$  
\item[(B)'] there exists $z\in -\Lambda$ such that, for every $j\in {\mathbb N}_{N},$ we have $-e^{i\theta}P_{j}(z)\in \Sigma_{\zeta \pi/2}.$  
\end{itemize}
We know that the
operator $-e^{i\theta}P_{j}(A)$ is the integral generator of an
exponentially bounded, analytic $\zeta$-times regularized resolvent
family $(R_{\zeta,\theta,P_{j}}(t))_{t\geq 0}$ of angle
$\frac{\pi-|\theta|}{\zeta}-\frac{\pi}{2}.$ Here,
the requirements needed for applying Theorem \ref{sjajno} are satisfied, which can be verified with the help of asymptotic expansion formuale \eqref{asim1}-\eqref{asim3} and the conditions (A)'-(B)'. As a consequence,
we have that $((R_{\zeta,\theta,P_{j}}(t))_{t\geq 0})_{1\leq j\leq N}$
are densely disjoint distributionally chaotic.
\item[(ii)] (\cite{ji}, \cite{dynamo}) Let $X$ be a symmetric
space of non-compact type and rank one, let $p>2,$ let the parabolic
domain $P_{p}$ and the positive real number $c_{p}$ possess the same
meaning as in \cite{ji}, and let $P^{j}(z)=\sum
\limits_{l=0}^{n}a_{l,j}z^{l},$ $z\in {\mathbb C}$ be a non-constant complex polynomial
with $a_{l,n}>0$ ($j\in {\mathbb N}_{N}$). Suppose that $\zeta \in (1,2),$
$
\pi-n\arctan
\frac{|p-2|}{2\sqrt{p-1}}-\zeta\frac{\pi}{2}>0
$
and
$
\theta \in
(n\arctan
\frac{|p-2|}{2\sqrt{p-1}}+\zeta\frac{\pi}{2}-\pi,\pi-n\arctan
\frac{|p-2|}{2\sqrt{p-1}}-\zeta\frac{\pi}{2}).
$
We know that
$-e^{i\theta}P^{j}(\Delta_{X,p}^{\natural})$ is the integral generator
of an exponentially bounded, analytic $\zeta$-times regularized
resolvent family $(R_{\zeta,\theta,P^{j}}(t))_{t\geq 0}$ of angle
$\frac{1}{\zeta}(\pi-n\arctan
\frac{|p-2|}{2\sqrt{p-1}}-\zeta\frac{\pi}{2}-|\theta|),$ for any $j\in {\mathbb N}_{N}.$ Using the fact that $\mbox{int}(P_{p})\subseteq
\sigma_{p}(\Delta_{X,p}^{\natural}),$ the validity of conditions (A)'-(B)' with the set $-\Lambda$ and polynomials $-e^{i\theta}P_{j}(z)$ replaced therein with the set int$(P_{p})$ and polynomials $-e^{i\theta}P^{j}(z)$
ensures that $(R_{\zeta,\theta,P^{j}}(t))_{t\geq 0}$ are densely disjoint distributionally chaotic.
\end{example}

We close the paper with the observation that distributionally chaotic properties of abstract multi-term fractional differential equations have been considered by the author in \cite{marek-nsjom}. 
Applying Theorem \ref{cvddc}, we can simply deduce several extensions of results established in this section for corresponding fractional resolvent operator families governing solutions of such equations.

In \cite{russmath} and \cite{zayed}, we have followed slightly different approaches to the concepts of disjoint hypercyclicity, disjoint topologically mixing property and the usual distributional chaos for abstract (multi-term) fractional differential equations. Disjoint
distributionally chaotic solutions of such equations can be analyzed by following this approach, as well. Related results will appear somewhere else.


\begin{thebibliography}{90}

\bibitem{angela}
Albanese, A.A., Barrachina, X., Mangino, E.M., Peris, A.,
Distributional chaos for strongly continuous semigroups of operators.
Commun. Pure Appl. Anal. 12 (2013), 2069--2082.

\bibitem{banasiakc}
Banasiak, J., Moszy\'nski, M.,
A generalization of Desch-Schappacher-Webb criterion for chaos.
Discrete Contin. Dyn. Syst. 12 (2005), 959--972.

\bibitem{bara}
Barrachina, X.,
Distributional Chaos of $C_{0}$-Semigroups of Operators.
Ph.D. Thesis, Universitat Polit\`echnica, Val\`encia, 2013.

\bibitem{bayart}
Bayart, F., Matheron, E.,
Dynamics of Linear Operators. Cambridge:
Cambridge Tracts in Mathematics, Cambridge University
Press, UK, 179(1) 2009.

\bibitem{bajlekova}
Bazhlekova, E.,
Fractional Evolution Equations in Banach
Spaces. Ph.D. Thesis, Eindhoven University of Technology,
Eindhoven, 2001.

\bibitem{2011}
Berm\'udez, T., Bonilla, A., Martinez-Gimenez, F., Peris, A.,
Li-Yorke and distributionally chaotic operators.
J. Math. Anal. Appl. 373 (2011),  83--93.

\bibitem{bg07}
Bernal-Gonz\'alez, L.,
Disjoint hypercyclic operators.
Studia Math. 182 (2007), 113-131.

\bibitem{2013JFA}
Bernardes Jr., N.C., Bonilla, A., M\"uler, V., Peris, A.,
Distributional chaos for linear operators. J. Funct. Anal.
265 (2013),  2143--2163.

\bibitem{2018JMAA}
Bernardes Jr., N.C., Bonilla, A., Peris, A., Wu, X.,
Distributional chaos for operators on Banach spaces.
J. Math. Anal. Appl. 459 (2018), 797--821.

\bibitem{bp07}
B\`es, J., Peris, A.,
Disjointness in hypercyclicity.
J. Math. Anal. Appl. 336 (2007), 297-315.

\bibitem{bm201345}
B\`es, J., Martin, \"O., Peris, A., Shkarin, S.,
Disjoint mixing operators.
J. Funct. Anal. 263 (2013), 1283--1322.

\bibitem{ejmaa}
Chen, C.-C., Kosti\' c, M., Pilipovi\'c, S., Velinov, D.,
$d$-Hypercyclic and $d$-chaotic properties of abstract differential equations of first order.
Electronic J. Math. Anal. Appl. 6 (2018), 1--26.

\bibitem{chen-chen}
Chen, C.-C., Chen, K.-Y., Kosti\' c, M.,
Distributional chaos for weighted translation operators on groups.
preprint, https://arxiv.org/pdf/1807.05191.

\bibitem{mendozaa}
Conejero, J.A., Lizama, C., Murillo-Arcdila, M., Peris, A.,
Linear dynamics of semigroups generated
by differential operators. Open Math. {15} (2017), 
745--767.

\bibitem{mendoza}
Conejero, J.A., Kosti\' c, M., Miana, P.J., Murillo-Arcila, M.,
Distributionally chaotic families of operators on Fr\' echet spaces.
Comm. Pure Appl. Anal. 15 (2016), 1915--1939.

\bibitem{ralphchaos}
deLaubenfels, R., Emamirad, H., Grosse--Erdmann, K.-G.,
Chaos for semigroups of unbounded operators.
Math. Nachr. 261/262 (2003), 47--59.

\bibitem{fund}
Desch, W., Schappacher, W.,  Webb, G.F., Hypercyclic
and chaotic semigroups of linear operators. Ergodic Theory Dynam. Systems
17 (1997), 1--27.

\bibitem{duan}
Duan, J., Fu, X.-C., Liu, P.-D., Manning, A.,
A linear chaotic quantum harmonic oscillator.
Appl. Math. Lett. 12 (1999), 15--19.

\bibitem{russmath}
Fedorov, V., Kosti\'c, M.,
Disjoint hypercyclic and disjoint topologically mixing properties of degenerate fractional differential equations. Russian Math. 61 (2018), 31--46. 

\bibitem{erdper}
Grosse-Erdmann, K.-G., Peris, A.,
Linear Chaos. London: Springer-Verlag, London 2011.

\bibitem{ji} 
Ji, L., Weber, A., Dynamics of the heat semigroup on symmetric
spaces. Ergod. Th. Dynam. Sys. 30 (2010), 457--468.

\bibitem{knjigah}
Kosti\'c, M., Generalized Semigroups and Cosine Functions.
Mathematical Institute SANU, Belgrade 2011.

\bibitem{knjigaho}
Kosti\'c, M., Abstract Volterra Integro-Differential
Equations. Boca Raton, New York and London: Taylor and Francis Group/CRC Press, Boca Raton, Fl. 2015.

\bibitem{FKP}
Kosti\' c, M., 
Abstract Degenerate Volterra Integro-Differential Equations: Linear Theory and Applications.
Mathematical Institute SANU, Belgrade, accepted.

\bibitem{marek-nsjom}
Kosti\' c, M., 
Distributionally chaotic properties of abstract fractional differential equations.
Novi Sad J. Math. 45 (2015), 201--213.

\bibitem{dynamo}
Kosti\' c, M., 
Hypercyclicity and topologically mixing property for abstract time-fractional equations. 
Dyn. Syst. 27
(2012), 213--221.

\bibitem{filomat}
Kosti\' c, M., 
Hypercyclic and chaotic integrated $C$-cosine functions, Filomat 26 (2012), 1--44.

\bibitem{zayed}
Kosti\' c, M.,
The existence of distributional chaos in abstract degenerate fractional differential equations. 
J. Fract. Calc. Appl. 7 (2016), 153--174.

\bibitem{marek-trio}
Kosti\'c, M.,
Distributionally chaotic multivalued linear operators and their generalizations.
preprint.

\bibitem{revista}
Kosti\' c, M.,
Disjoint distributional chaos in Fr\' echet spaces.
preprint.

\bibitem{zhengtaiwan} 
Li, M., Zheng, Q., Zhang, J., Regularized resolvent
families. Taiwanese J. Math. 11 (2007), 117--133.

\bibitem{ma10} 
Martin, \"O., Disjoint Hypercyclic and Supercyclic Composition Operators. 
Ph.D. Thesis, Bowling Green State University, 2010.

\bibitem{p-oprocha-tams}
Oprocha, P.,
Distributional chaos revisited. 
Trans. Amer. Math. Soc. 361 (2009), 4901--4925.

\bibitem{p-oprocha}
Oprocha, P.,
A quantum harmonic oscillator and strong chaos. 
J. Phys. A 39 (2006), 14559--14565.

\bibitem{prus}
Pr\"uss, J., 
Evolutionary Integral Equations and
Applications. Basel:
Birkh\"auser-Verlag, 1993.

\bibitem{smital}
Schweizer, B., Sm\' ital, J.,
Measures of chaos and a spectral decomposition of dynamical systems on the interval.
Trans. Amer. Math. Soc. 344 (1994), 737--754.

\end{thebibliography}
\end{document}